\documentclass[a4paper]{amsart}
\usepackage{amsmath,amssymb,amsthm}
\usepackage[utf8]{inputenc}
\usepackage[T1]{fontenc}
\usepackage{libertine}
\usepackage[libertine,cmintegrals,cmbraces]{newtxmath}
\usepackage{scalerel}
\usepackage{multicol}
\usepackage{enumitem}
\usepackage{comment} 
\usepackage{microtype}
\usepackage{xcolor}
\definecolor{darkred}{RGB}{139,0,0}
\definecolor{darkblue}{RGB}{0,0,139}
\definecolor{darkgreen}{RGB}{0,100,0}
\usepackage[linktocpage]{hyperref}
\hypersetup{
  colorlinks   = true, 
  urlcolor     = darkred, 
  linkcolor    = darkred, 
  citecolor   = darkgreen}
\usepackage{tikz}
\usepackage{tikz-cd}
\usepackage[capitalise]{cleveref}

\usepackage{stmaryrd}

\setenumerate[1]{label=(\roman*),}

\setitemize[1]{label=\raisebox{0.25ex}{\tiny$\bullet$}}

\tikzset{
  symbol/.style={
    draw=none,
    every to/.append style={
      edge node={node [sloped, allow upside down, auto=false]{$#1$}}}}}

\AtBeginDocument{%
   \def\MR#1{}}


\newcommand\smallsquare{\mathbin{\text{\raise0.17ex\hbox{\scalebox{.7}{$\blacksquare$}}}}}

\newcommand{\id}{\ensuremath{\mathrm{id}}}

\newcommand{\BO}{\ensuremath{\mathrm{BO}}}

\newcommand{\BSO}{\ensuremath{\mathrm{BSO}}}

\newcommand{\Diff}{\ensuremath{\mathrm{Diff}}}

\newcommand{\BlockDiff}{\ensuremath{\widetilde{\mathrm{Diff}}}}

\newcommand{\BDiff}{\ensuremath{\mathrm{BDiff}}}

\newcommand{\BlockBDiff}{\ensuremath{\mathrm{B\widetilde{Diff}}}}

\newcommand{\hAut}{\ensuremath{\mathrm{hAut}}}

\newcommand{\BhAut}{\ensuremath{\mathrm{BhAut}}}

\newcommand{\catsingle}[1]{\ensuremath{\mathcal{#1}}}

\newcommand{\oG}{\ensuremath{\mathrm{G}}}

\newcommand{\oO}{\ensuremath{\mathrm{O}}}

\newcommand{\bfH}{\ensuremath{\mathbf{H}}}

\newcommand{\bfR}{\ensuremath{\mathbf{R}}}
\newcommand{\bfZ}{\ensuremath{\mathbf{Z}}}
\newcommand{\bfQ}{\ensuremath{\mathbf{Q}}}

\newcommand{\cN}{\ensuremath{\catsingle{N}}}

\newcommand{\cR}{\ensuremath{\catsingle{R}}}
\newcommand{\cS}{\ensuremath{\catsingle{S}}}

\newcommand{\ra}{\rightarrow}

\newcommand{\lra}{\longrightarrow}

\newcommand{\xra}[1]{\xrightarrow{#1}}
\newcommand{\xlra}[1]{\overset{#1}{\longrightarrow}}

\newcommand{\scal}{\mathrm{scal}}
\newcommand{\sect}{\mathrm{sec}}
\newcommand{\Ric}{\mathrm{Ric}}
\newcommand{\st}{\mathrm{st}}

\newcommand{\BG}{\mathrm{BG}}

\newtheorem*{nthm}{Theorem}
\newtheorem*{ncor}{Corollary}

\newtheorem*{nlem}{Lemma}

\theoremstyle{definition}

\theoremstyle{remark}

\newtheorem{rem}{Remark}

\newtheorem*{nrem}{Remark}


\setcounter{tocdepth}{1}


\addtolength{\textwidth}{+1pt}

\begin{document}

\title{An $\text{HP}^2$-bundle over $\text{S}^4$ with nontrivial $\hat{\text{A}}$-genus}


\author{Manuel Krannich}
\email{krannich@dpmms.cam.ac.uk}
\address{Centre for Mathematical Sciences, Wilberforce Road, Cambridge CB3 0WB, UK}

\author{Alexander Kupers}
\email{a.kupers@utoronto.ca}
\address{Department of Computer and Mathematical Sciences, University of Toronto Scarborough, 1265 Military Trail, Toronto, ON M1C 1A4, Canada}

\author{Oscar Randal-Williams}
\email{o.randal-williams@dpmms.cam.ac.uk}
\address{Centre for Mathematical Sciences, Wilberforce Road, Cambridge CB3 0WB, UK}

\begin{abstract}
We explain the existence of a smooth $\bfH P^2$-bundle over $S^4$ whose total space has nontrivial $\hat{A}$-genus. Combined with an argument going back to Hitchin, this answers a question of Schick and implies that the space of Riemannian metrics of positive sectional curvature on a closed manifold can have nontrivial higher rational homotopy groups.
%
%
\end{abstract}

\maketitle

In view of applications to spaces of Riemannian metrics with positive curvature, there has been recent interest in constructing smooth fibre bundles over spheres whose total space has nontrivial $\hat{A}$-genus. In their work on the space of positive scalar curvature metrics, Hanke--Schick--Steimle \cite[Corollary\,1.6]{HankeSchickSteimle} showed that such bundles exist for every dimension of the base sphere. However, as noted on page 337 loc.\,cit., their method does not yield bundles with an explicit description of the fibre, though they are able to show that the fibre may be chosen to carry a metric of positive scalar curvature using a theorem of Stolz. For applications to spaces of metrics with positive sectional or Ricci curvature it is desirable to have  examples with fibre carrying such a metric. In \cite[p.\,3999]{BotvinnikEbertWraith} (see also \cite[Section\,9]{SchickICM}) it is said that this ``seems to be a very difficult problem'': we offer the following solution.

\begin{nthm}\label{thm:main} 
There exists a smooth oriented fibre bundle $\bfH P^2\ra E^{12}\ra S^4$ with $\hat{A}(E)\neq 0$.
\end{nthm}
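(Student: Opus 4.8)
\emph{Step 1: reformulation.} For a smooth oriented fibre bundle $\pi\colon E\ra B$ one has $TE\cong T^vE\oplus\pi^*TB$ for the vertical tangent bundle $T^vE$, hence $\hat{A}(TE)=\hat{A}(T^vE)\cdot\pi^*\hat{A}(TB)$; since $p_1(TS^4)=0$ we get $\hat{A}(TS^4)=1$ and thus $\hat{A}(E)=\langle\hat{A}_3(T^vE),[E]\rangle$, where $\hat{A}_3=\tfrac1{967680}(-31p_1^3+44p_1p_2-16p_3)$. Any such $E$ is spin --- its integral cohomology is free and concentrated in degrees $0,4,8,12$, so $w_2(E)=w_2(T^vE)=0$ --- whence $\hat{A}(E)\in\bfZ$ and it suffices to make it nonzero. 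By Chern--Hirzebruch--Serre the signature is multiplicative, and $\sign(S^4)=0$, so $\sign(E)=0$, i.e.\ $\langle L_3(TE),[E]\rangle=0$ with $L_3=\tfrac1{945}(62p_3-13p_1p_2+2p_1^3)$. Eliminating $\langle p_3,[E]\rangle$ between the two relations and writing $p_1(T^vE)=2u+a\sigma$, $p_2(T^vE)=7u^2+bu\sigma$ and $u^3=m\cdot u^2\sigma$ in $H^*(E)$ --- where $\sigma\in H^4(E)$ is pulled back from a generator of $H^4(S^4)$, $u\in H^4(E)$ restricts to a generator of $H^4(\bfH P^2)$, and $\langle u^2\sigma,[E]\rangle=1$ --- one finds
\[ \hat{A}(E)\;=\;\frac{4m+b-a}{11904}. \]
It therefore suffices to produce a smooth oriented $\bfH P^2$-bundle over $S^4$ with $4m+b-a\neq0$.

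\emph{Step 2: the obvious constructions give zero.} The projectivisation $\mathbb P(V)$ of a rank-$3$ quaternionic bundle $V\ra S^4$ --- i.e.\ a bundle with structure group $Sp(3)$ acting linearly --- has $(m,a,b)=(k,2k,-2k)$, so $\hat{A}=0$; this is forced, since the total space carries positive scalar curvature. At the other extreme, a bundle obtained by a clutching construction from a class in $\pi_3(\Diff_\partial(D^8))$, i.e.\ supported inside an embedded disc, is topologically trivial by the Alexander trick, so its total space is homeomorphic to $\bfH P^2\times S^4$ and $\hat{A}=0$ by Novikov's topological invariance of rational Pontryagin classes. The bundle we seek must thus be genuinely exotic.

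\emph{Step 3: cutting down the diffeomorphism group.} These bundles are classified by $\BDiffor(\bfH P^2)$. Writing $X:=\bfH P^2\setminus\interior{D^8}$ --- the disc bundle of the normal bundle $\nu$ of $\bfH P^1\subset\bfH P^2$, with $\partial X=S^7$ --- the fibre sequence $\Diff_\partial(X)\ra\Diffor(\bfH P^2)\ra\Embor(D^8,\bfH P^2)\simeq\Fror(\bfH P^2)$ has $\pi_3(\Fror(\bfH P^2))=\pi_4(\Fror(\bfH P^2))=0$: indeed $\pi_4(\SO(8))=0$ and the class $[T\bfH P^2|_{S^4}]=[\nu]$ generates $\pi_3(\SO(8))\cong\bfZ$, because $p_1(\nu)=2$. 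Hence $\pi_3(\Diffor(\bfH P^2))\cong\pi_3(\Diff_\partial(X))$, and one notes that $X/\partial X$ is the Thom space of $\nu$, which is homotopy equivalent to $\bfH P^2$ --- so surgery theory for $X$ is tractable.

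\emph{Step 4: the hard part.} It remains to find a class in $\pi_3(\Diff_\partial(X))$ on which the characteristic class $\kappa:=\pi_!\hat{A}_3(T^vE_{\mathrm{univ}})\in H^4(\BDiffor(\bfH P^2);\bfQ)$ --- equivalently, the quantity $4m+b-a$ --- is nonzero, and this is the only non-formal step. The plan is to analyse $\BDiff_\partial(X)$ via surgery and smoothing theory: the Weiss fibre sequence relates $\BDiff_\partial(X)$ to the block diffeomorphism space $\BlockBDiff_\partial(X)$ in terms of $\BDiff_\partial(D^{12})$ and the tangent bundle of $X$, while the surgery exact sequence (using $X/\partial X\simeq\bfH P^2$) describes $\BlockBDiff_\partial(X)$ via the normal invariants $[\bfH P^2,G/O]$ and the surgery obstruction groups; from this one extracts a class outside the linear and disc-supported loci and computes its effect on the sub-leading vertical Pontryagin data. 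The main obstacle is making this computation come out nonzero --- equivalently, showing that the vertical $p_1$ and $p_2$ of an $\bfH P^2$-bundle over $S^4$ need not be those of a projectivised quaternionic bundle. An alternative, in the spirit of Hanke--Schick--Steimle, is to evaluate the rational part of the family Dirac index in $\KO^{-8}(S^4)$ directly; either way, this is the technical heart of the argument.
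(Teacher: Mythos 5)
Your Steps~1--3 are correct, and Step~1 is a nice reorganisation: passing to the vertical tangent bundle and using Chern--Hirzebruch--Serre multiplicativity of the signature to eliminate $p_3$ gives the clean characteristic-number formula $\hat{A}(E)=\tfrac{1}{11904}(4m+b-a)$, which I have checked does agree with the paper's final expression $\lambda(\tfrac{1}{2880}B+\tfrac{1}{504}C)$ once the surgery obstruction constraint is imposed. Step~2 is accurate context, and Step~3's reduction to $\pi_3(\Diff_\partial(X))$ via the frame bundle of $\bfH P^2$ is a correct computation.

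However, Step~4 is a genuine gap, and you say so yourself: ``this is the technical heart of the argument.'' You describe a plan (analyse $\BDiff_\partial(X)$ via the Weiss fibre sequence and the surgery exact sequence for $X$, or alternatively compute a family Dirac index à la Hanke--Schick--Steimle) but do not carry it out, and in particular you do not produce a class on which $4m+b-a\neq 0$, nor do you prove that any candidate class in a block/surgery-theoretic model actually lifts to an honest smooth bundle. Both of these are exactly the points where the paper does real work. Concretely, the paper (a) factors $\hat{A}$ through $\pi_4(\BlockBDiff(\bfH P^2))$, (b) proves the comparison map $\pi_4(\BDiff(\bfH P^2))\otimes\bfQ\to\pi_4(\BlockBDiff(\bfH P^2))\otimes\bfQ$ is \emph{surjective} using Morlet's lemma of disjunction together with the input $\pi_3(\BDiff_\partial(D^8))\otimes\bfQ=0$ (a theorem of Randal-Williams, not something formal), and then (c) works in the surgery exact sequence for $\cS_\partial(D^4\times\bfH P^2)$, identifies the rational normal invariants with $u\cdot\bfQ[z]/(z^3)$ via the Pontrjagin character, computes both the surgery obstruction $8\sigma=\lambda(-\tfrac13A+\tfrac{28}{45}B-\tfrac{496}{63}C)$ and $\hat{A}(M')=\lambda(\tfrac{1}{2880}B+\tfrac{1}{504}C)$, and observes these two linear forms are independent. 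Your sketched route differs (you propose passing through $\BDiff_\partial(X)$ and the Weiss fibre sequence rather than going directly to block diffeomorphisms of $\bfH P^2$ and Morlet), and it might well be made to work, but as written the decisive computation and the lifting argument are absent, so the proof is incomplete precisely where completeness matters.
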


\begin{nrem}
The argument we give also shows that this fibre bundle may be assumed to have a section with trivial normal bundle (see \cref{rem:section}), and provides analogous $\bfH P^n$-bundles over $S^4$ for all even $n \geq 2$ (see \cref{rem:higher-dimensions}). It can certainly be extended further.
\end{nrem}

The standard Riemannian metric $g_{\st}$ on $\bfH P^2$ has positive sectional curvature, so pulling back $g_{\st}$ along orientation-preserving diffeomorphisms yields a map
\[
(-)^*g_{\st}\colon \Diff(\bfH P^2)\lra \cR^{\sect>0}(\bfH P^2)\subset \cR^{\Ric>0}(\bfH P^2)\subset \cR^{\scal>0}(\bfH P^2)
\] 
from the group of diffeomorphisms of $\bfH P^2$ in the smooth topology to the spaces of Riemannian metrics on $\bfH P^2$ having positive sectional, Ricci, or scalar curvature. By an argument of Hitchin \cite{Hitchin} (see for instance \cite[p.\,3999]{BotvinnikEbertWraith} for an explanation of this), the theorem has the following corollary, which answers a question of Schick \cite[p.\,30]{OWL} and provides an example as asked for in \cite[Remark\,2.2]{BotvinnikEbertWraith}. 

\begin{ncor}
The induced map
\[\pi_3((-)^*g_{\st})\otimes\bfQ\colon \pi_3(\Diff(\bfH P^2);\id)\otimes\bfQ\lra  \pi_3(\cR ^{\scal>0}(\bfH P^2);g_{\st})\otimes\bfQ\]
is nontrivial, so in particular $\pi_3(\cR ^{\sect>0}(\bfH P^2);g_{\st})\otimes\bfQ\neq0$ and $\pi_3(\cR ^{\Ric>0}(\bfH P^2);g_{\st})\otimes\bfQ\neq0$.
\end{ncor}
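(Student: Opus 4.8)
The plan is to deduce this from the theorem above by the fibrewise version of Hitchin's harmonic spinor argument. Up to isomorphism, smooth oriented $\bfH P^2$-bundles over $S^4$ are classified by homotopy classes of maps $S^4\to B\Diff(\bfH P^2)$, and since $B\Diff(\bfH P^2)$ is path-connected, choosing a based representative identifies such a class with an element of $\pi_4(B\Diff(\bfH P^2);\mathrm{pt})\cong\pi_3(\Diff(\bfH P^2);\id)$. Let $\alpha$ be a class corresponding to the bundle $E^{12}\to S^4$ furnished by the theorem, so that $\hat{A}(E)\neq 0$. As $g_{\st}$ has positive sectional, hence positive scalar, curvature, it is a legitimate basepoint, and we must show that the image of $\alpha$ under $\pi_3((-)^*g_{\st})$ is nonzero in $\pi_3(\cR^{\scal>0}(\bfH P^2);g_{\st})\otimes\bfQ$. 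Suppose for contradiction that it vanishes; then there is an integer $N\geq 1$ with $N\alpha\mapsto 0$ in $\pi_3(\cR^{\scal>0}(\bfH P^2);g_{\st})$.

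First I would pin down the bundle classified by $N\alpha$ and compute its $\hat{A}$-genus. Fix a self-map $\delta\colon S^4\to S^4$ of degree $N$; then $N\alpha$ is represented by the composite of a classifying map for $E$ with $\delta$, so the corresponding bundle is the pullback $E_N:=\delta^*E$, and there is a map $\bar{g}\colon E_N\to E$ covering $\delta$ which restricts to a diffeomorphism on each fibre. Consequently the vertical tangent bundles satisfy $T^vE_N\cong\bar{g}^*T^vE$, and since $TS^4$ is stably trivial the bundles $TE_N$ and $\bar{g}^*TE$ are stably isomorphic; hence $\hat{A}(TE_N)=\bar{g}^*\hat{A}(TE)$. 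Pairing with the fundamental class and using $\bar{g}_*[E_N]=N\cdot[E]$ yields $\hat{A}(E_N)=N\cdot\hat{A}(E)\neq 0$.

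Now comes the geometric step. The hypothesis $N\alpha\mapsto 0$ says precisely that the family over $S^3$ of positive scalar curvature metrics $x\mapsto\phi_x^*g_{\st}$ on $\bfH P^2$ determined by a based clutching function $x\mapsto\phi_x$ for $E_N$ is null-homotopic in $\cR^{\scal>0}(\bfH P^2)$ based at $g_{\st}$, and therefore extends to a map $D^4\to\cR^{\scal>0}(\bfH P^2)$ (up to the bookkeeping convention relating clutching functions to pullback versus pushforward of metrics, which only introduces a sign and is immaterial here). Gluing the constant fibrewise metric $g_{\st}$ over one hemisphere of $S^4$ to this $D^4$-family of metrics over the other hemisphere produces a fibrewise positive scalar curvature metric on $E_N\to S^4$. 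Equipping $S^4$ with a sufficiently small round metric and choosing a horizontal distribution, the O'Neill curvature formulas show that the resulting submersion metric on the closed $12$-manifold $E_N$ has positive scalar curvature; moreover $E_N$ is spin, being orientable with $S^4$ spin and $H^2(E_N;\bfZ/2)=0$ by the Serre spectral sequence (as $H^1(\bfH P^2;\bfZ/2)=H^2(\bfH P^2;\bfZ/2)=0$). By the Lichnerowicz formula, a closed spin manifold admitting positive scalar curvature has vanishing $\hat{A}$-genus, so $\hat{A}(E_N)=0$, contradicting the previous paragraph. (Alternatively one may keep the base fixed and invoke the Atiyah--Singer family index theorem: invertibility of the fibrewise Dirac operator makes the family index over $S^4$ vanish, whence so does its pushforward to a point, which recovers $\hat{A}(E_N)$ up to a universal constant.)

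This contradiction establishes that $\pi_3((-)^*g_{\st})(\alpha)\otimes\bfQ\neq 0$, which is the first assertion. For the remaining two, the map $(-)^*g_{\st}$ factors as $\Diff(\bfH P^2)\to\cR^{\sect>0}(\bfH P^2)\to\cR^{\Ric>0}(\bfH P^2)\to\cR^{\scal>0}(\bfH P^2)$, so if the image of $\alpha$ were a torsion element of $\pi_3(\cR^{\sect>0}(\bfH P^2);g_{\st})$ or of $\pi_3(\cR^{\Ric>0}(\bfH P^2);g_{\st})$ it would be torsion in $\pi_3(\cR^{\scal>0}(\bfH P^2);g_{\st})$, which we have just excluded; hence these two groups are nonzero after tensoring with $\bfQ$. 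I expect the genuine difficulty of the statement to reside entirely in the theorem above; within this deduction the only point requiring care is the passage from the homotopy-theoretic hypothesis to geometry, namely that the null-homotopy of the family of fibre metrics assembles into an honest fibrewise positive scalar curvature metric and that shrinking the base preserves positivity of the scalar curvature---but both of these are by now standard.
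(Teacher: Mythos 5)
The paper does not actually spell out a proof of this corollary: it cites Hitchin and \cite[p.\,3999]{BotvinnikEbertWraith} for the argument, and your write-up is a detailed version of exactly that deduction, so in spirit you are doing the same thing the paper intends. The structure is correct: reduce the rational nontriviality to a multiple $N\alpha$, observe that a null-homotopy of the clutching loop $x\mapsto\phi_x^*g_{\st}$ in $\cR^{\scal>0}(\bfH P^2)$ yields a fibrewise PSC metric on $E_N$, push this to a PSC metric on the total space, check $E_N$ is spin via the Serre spectral sequence, invoke Lichnerowicz (or the family index theorem), and compare with $\hat{A}(E_N)=N\hat{A}(E)\neq 0$ which you get from stable triviality of $TS^4$ and the degree-$N$ map $\bar g$.

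The one real slip is the direction of the rescaling in the O'Neill step: to promote the fibrewise PSC metric to a PSC metric on $E_N$ you must \emph{shrink the fibre metrics} (equivalently, take a sufficiently \emph{large} round metric on $S^4$), not a small one. With $g_t=g_F\oplus t^2\pi^*g_{S^4}$ and $t\to 0$, the O'Neill integrability term $-|A|^2_{g_t}$ scales like $t^{-4}$ and the base curvature term only like $t^{-2}$, so whenever the horizontal distribution is non-integrable (which it is here, since a flat bundle over the simply connected $S^4$ would be trivial, contradicting $\hat A(E_N)\neq0$) the scalar curvature of $g_t$ tends to $-\infty$. Shrinking the fibres instead sends $|A|^2$ to zero while amplifying the fibre scalar curvature, which is the standard and correct direction. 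With that corrected, your argument goes through.
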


\subsection*{Proof of the Theorem}

Smooth $\bfH P^2$-bundles over $S^4$ (together with an identification of the fibre over the basepoint with $\bfH P^2$) are classified by $\pi_4(\BDiff(\bfH P^2))$, so our task is to show that the morphism $\hat{A}\colon \pi_4(\BDiff(\bfH P^2))\ra \bfQ$ assigning an $\bfH P^2$-bundle $E\ra S^4$ the $\hat{A}$-genus of the total space is nontrivial. This morphism admits a factorisation of the form
\begin{equation}\label{equ:factorsation}
\pi_4(\BDiff(\bfH P^2))\lra \pi_4(\BlockBDiff(\bfH P^2))\xlra{\hat{A}}\bfQ,
\end{equation}
where $\BlockDiff(\bfH P^2)$ is the block diffeomorphism group of $\bfH P^2$ and the first map is induced by the canonical comparison map $\Diff(\bfH P^2)\ra \BlockDiff(\bfH P^2)$. This factorisation follows for instance from \cite[Theorem 1]{EbertRW}, but there is also a more direct argument: via the canonical isomorphism $\pi_4(\BDiff(\bfH P^2))\cong \pi_3(\Diff(\bfH P^2);\id )$, the morphism $\hat{A}\colon \pi_4(\BDiff(\bfH P^2)\ra \bfQ$ is given by mapping a diffeomorphism $\phi\colon D^3\times \bfH P^2\ra D^3\times \bfH P^2$ that is the identity on the boundary and commutes with the projection to $S^3$ to the $\hat{A}$-genus of the glued manifold $D^4\times \bfH P^2\cup_{\phi\cup\id} D^4\times \bfH P^2$. This description of the morphism makes clear that it factors through the map $\pi_3(\Diff(\bfH P^2);\id )\ra \pi_0(\Diff_{\partial}(D^3\times\bfH P^2))\cong \pi_3(\BlockDiff(\bfH P^2);\id)$ that only remembers the underlying isotopy class of $\phi$.

We thus have to show nontriviality of the composition \eqref{equ:factorsation}. It suffices to check this after rationalisation, which makes the first map surjective:

\begin{nlem}\label{lem:morlet}
The map $\pi_4(\BDiff(\bfH P^2)) \otimes\bfQ\lra \pi_4(\BlockBDiff(\bfH P^2))\otimes\bfQ$ surjective.
\end{nlem}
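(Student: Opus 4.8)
The plan is to analyse the homotopy fibre of $\BDiff(\bfH P^2)\to\BlockBDiff(\bfH P^2)$, which is the homotopy fibre $\BlockDiff(\bfH P^2)/\Diff(\bfH P^2)$. From the long exact sequence of homotopy groups of this fibration, surjectivity of $\pi_4(\BDiff(\bfH P^2))\otimes\bfQ\to\pi_4(\BlockBDiff(\bfH P^2))\otimes\bfQ$ is implied by the vanishing of $\pi_3(\BlockDiff(\bfH P^2)/\Diff(\bfH P^2))\otimes\bfQ$, since then the connecting homomorphism out of $\pi_4(\BlockBDiff(\bfH P^2))\otimes\bfQ$, which is the only obstruction, has trivial target. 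Looping once, this is the same as showing that $\pi_2$ of the homotopy fibre of $\Diff(\bfH P^2)\to\BlockDiff(\bfH P^2)$ vanishes rationally.

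To compute this homotopy fibre I would use Morlet's comparison theorem, which presents it as the space of sections (relative to the boundary, after excising an open disc from $\bfH P^2$) of a bundle over $\bfH P^2\setminus\mathring D^8$ whose fibre is $\BlockDiff_\partial(D^8)/\Diff_\partial(D^8)\simeq\BDiff_\partial(D^8)$; complementarily, the work of Burghelea--Lashof, Hatcher, Igusa, Waldhausen and Weiss--Williams identifies this fibre, in a range of degrees, with the infinite loop space of a spectrum built --- via a homology-type assembly together with its canonical involution --- out of the smooth Whitehead spectrum $\Wh^{\mathrm{Diff}}(\bfH P^2)$. The essential input is that $\Wh^{\mathrm{Diff}}(\bfH P^2)$ is rationally $4$-connected: since $\bfH P^2$ is simply connected the assembly $\bfH P^2_+\wedge\Wh^{\mathrm{Diff}}(\ast)\to\Wh^{\mathrm{Diff}}(\bfH P^2)$ is a rational equivalence, $H_*(\bfH P^2;\bfQ)$ lives in degrees $0,4,8$, and $\pi_*(\Wh^{\mathrm{Diff}}(\ast))\otimes\bfQ$ vanishes below degree $5$ --- its first nonzero group being $\pi_5\otimes\bfQ\cong\bfQ$, by Borel's computation of $K(\bfZ)\otimes\bfQ$ together with Waldhausen's theorem relating $A$-theory to the Whitehead spectrum. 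Hence the assembled spectrum is rationally highly connected, and $\pi_2$ of its infinite loop space is rationally trivial.

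The step I expect to be the main obstacle is reconciling the degrees, because $\dim\bfH P^2=8$ is rather small: the naive concordance stable range $\phi(d)=\min\bigl(\tfrac{d-4}{3},\tfrac{d-7}{2}\bigr)$ is essentially $0$ at $d=8$, so the concordance-theoretic identification above is not literally valid in the degrees I need. I would circumvent this using Goodwillie's metastable-range estimates for the difference between a concordance (or pseudoisotopy) space and its stable, Whitehead-theoretic, linear approximation, which are strong enough in these low degrees, with Cerf's theorem --- pseudoisotopy implies isotopy for simply connected manifolds of dimension at least $5$ --- disposing of the bottom degree. Alternatively, one can phrase everything in terms of the $11$-dimensional manifold $D^3\times\bfH P^2$, via the identification $\pi_0(\Diff_\partial(D^3\times\bfH P^2))\cong\pi_3(\BlockDiff(\bfH P^2);\id)$ recalled above and its analogue for $\Diff$, which brings more of the relevant pseudoisotopy spaces into Igusa's stable range. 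Once the appropriate range is secured the rational vanishing is immediate from the $4$-connectivity of the smooth Whitehead spectrum, and the rest of the argument is routine.
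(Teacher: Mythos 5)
Your reduction to showing $\pi_3\bigl(\hofib(\BDiff(\bfH P^2)\to\BlockBDiff(\bfH P^2))\bigr)\otimes\bfQ=0$ is correct and is also where the paper starts, but from there the two arguments diverge and yours has a genuine gap. You propose to compute this group by identifying the fibre $\BlockDiff(\bfH P^2)/\Diff(\bfH P^2)$ with a section space (Burghelea--Lashof/Morlet) and then, ``in a range of degrees,'' with an infinite loop space built from $\Wh^{\DIFF}(\bfH P^2)$ (Weiss--Williams and the concordance stability theorem). You correctly observe that the relevant range is the pseudoisotopy stable range, which for $\dim \bfH P^2=8$ is essentially zero, so the Whitehead-theoretic model is not available in the degrees you need; but the fixes you sketch --- Goodwillie-style metastable estimates, Cerf's theorem, or working with $D^3\times\bfH P^2$ --- are not worked out and, at least in the form stated, do not obviously close the gap. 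Cerf only handles $\pi_0$; the metastable estimates for the concordance functor would need to be made precise and quantitatively strong enough at $d=8$, which is not a citation-level fact; and the identification $\pi_0\Diff_\partial(D^3\times\bfH P^2)\cong\pi_3\BlockDiff(\bfH P^2)$ concerns block diffeomorphisms, not the difference between genuine and block ones, so it does not by itself move the relevant group into a larger stable range. (A small additional slip: $\BlockDiff_\partial(D^8)/\Diff_\partial(D^8)\simeq\BDiff_\partial(D^8)$ holds only rationally, since $\pi_i\BlockDiff_\partial(D^8)\cong\Theta_{i+9}$ is finite but not zero.)

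The paper avoids the stable-range problem altogether by using a different Morlet-type input: the \emph{lemma of disjunction} of Burghelea--Lashof--Rothenberg. Because $\bfH P^2$ is $3$-connected, the square comparing $(\BDiff_\partial(D^8)\to\BlockBDiff_\partial(D^8))$ with $(\BDiff(\bfH P^2)\to\BlockBDiff(\bfH P^2))$ is $4$-cartesian, so the map of vertical homotopy fibres is $4$-connected; this reduces the claim to $\pi_3$ of the homotopy fibre for $D^8$, hence to $\pi_4(\BlockBDiff_\partial(D^8))\otimes\bfQ\cong\Theta_{12}\otimes\bfQ=0$ and to $\pi_3(\BDiff_\partial(D^8))\otimes\bfQ=0$. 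The latter is a specific low-degree computation from \cite[Theorem 4.1]{RWUpperBound}, not an application of the Igusa stable range, which is why the dimension $8$ causes no trouble. In short: the crucial connectivity estimate in the paper comes from the $3$-connectivity of $\bfH P^2$ via disjunction, not from concordance stability, and the remaining input is a cited rational computation for discs. You would need to either import that computation (or something of comparable strength) into your framework, or replace the Whitehead-theoretic step by a disjunction argument, to make your proposal complete.
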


\begin{proof}
Choosing an embedded disc $D^{8}\subset \bfH P^2$, we consider the commutative square
\begin{center}
\begin{tikzcd}
\BDiff_\partial(D^{8})\dar\rar&\BDiff(\bfH P^2)\dar\\
\BlockBDiff_\partial(D^{8})\rar&\BlockBDiff(\bfH P^2)
\end{tikzcd}
\end{center}
whose horizontal maps are induced by extending (block) diffeomorphisms of $D^{8}$ that are the identity on the boundary to $\bfH P^2$ by the identity. The claim follows by showing that the third rational homotopy group of the right vertical homotopy fibre vanishes for which we note that, since $\bfH P^2$ is $3$-connected, the square is $4$-cartesian by Morlet's lemma of disjunction \cite[Corollary 3.2, p.\,29]{BLR}, so it suffices to show that the third rational homotopy group of the left vertical map is trivial. Since $\pi_i(\BlockBDiff_\partial(D^{2n}))\cong\pi_0\Diff_\partial(D^{{2n}+i-1})\cong \Theta_{2n+i}$  vanishes rationally as the group $\Theta_{2n+i}$ of homotopy $(2n+i)$-spheres is finite, the claim follows from $\pi_3(\BDiff_\partial(D^{8}))\otimes\bfQ=0$ which holds by \cite[Theorem 4.1]{RWUpperBound}.
\end{proof}

Given the lemma, we are left to show that the map $\hat{A}\colon \pi_4(\BlockBDiff(\bfH P^2))\ra\bfQ$ is nontrivial, which we shall do after precomposition with the map
\begin{equation*}
\pi_4(\hAut(\bfH P^2)/\BlockDiff(\bfH P^2);\id)\lra \pi_4(\BlockBDiff(\bfH P^2))
\end{equation*}
induced by the inclusion of the homotopy fibre of the comparison map $\BlockBDiff(\bfH P^2)\ra \BhAut(\bfH P^2)$, where $\hAut(\bfH P^2)$ is the topological monoid of homotopy automorphisms of $\bfH P^2$. Considering this homotopy fibre is advantageous since the $h$-cobordism theorem provides an isomorphism 
\[
\pi_4(\hAut(\bfH P^2)/\BlockDiff(\bfH P^2))\cong \cS_\partial(D^4\times \bfH P^2)
\]
to the \emph{structure group} of $D^4\times \bfH P^2$ relative to $\partial D^4\times \bfH P^2$ in the sense of surgery theory (see \cite{WallBook} for background on surgery theory, especially Chapter 10), which in turn fits into the \emph{surgery exact sequence} of abelian groups
\[
0=L_{13}(\bfZ) \xlra{\partial} \cS_\partial(D^4\times \bfH P^2) \xlra{\eta} \cN_\partial(D^4\times \bfH P^2)\xlra{\sigma} L_{12}(\bfZ)\cong\bfZ
\] 
featuring the surgery obstruction map $\sigma$ from the normal invariants $\cN_\partial(D^4\times \bfH P^2)$ to the $L$-group $L_{12}(\bfZ)\cong \bfZ$. The standard smooth structure on $D^4\times \bfH P^2$ provides an isomorphism 
\[
\cN_\partial(D^4\times \bfH P^2)\cong [S^4 \wedge \bfH P^2_+,\oG/\oO],
\]
where $[-,-]$ stands for based homotopy classes and $\oG/\oO$ is the homotopy fibre of the map $\BO\ra \BG$ classifying the underlying stable spherical fibration of a stable vector bundle.

As $\BG$ has trivial rational homotopy groups, the map 
\[
[S^4 \wedge \bfH P^2_+,\oG/\oO] \lra [S^4 \wedge \bfH P^2_+,\BO] = \widetilde{KO}^0(S^4 \wedge \bfH P^2_+)
\]
 is rationally an isomorphism. Furthermore the Pontrjagin character gives an isomorphism
\[
\textstyle{\mathrm{ph}(-) = \mathrm{ch}(- \otimes_\bfR \mathbf{C})\colon \widetilde{KO}^0(S^4 \wedge \bfH P^2_+)\otimes \bfQ \overset{\cong}\lra \bigoplus_{i \geq 0} \widetilde{H}^{4i}(S^4 \wedge \bfH P^2_+;\bfQ) = u \cdot \bfQ[z]/(z^3)},
\]
where $u \in \widetilde{H}^4(S^4;\bfQ)$ denotes the cohomological fundamental class, and $z \in H^4(\bfH P^2; \bfQ)$ is the usual generator. Therefore for any triple $(A, B, C) \in \bfQ^3$ there exists a nonzero $\lambda \in \bfZ$ and a normal invariant $n \in \cN_\partial(D^4\times \bfH P^2)$ whose underlying stable vector bundle $\xi$ has $\mathrm{ph}(\xi) = \lambda \cdot u \cdot  (A + Bz + Cz^2)$. Since $S^4\wedge \bfH P^2_+$ has no nontrivial cup-products among elements of positive degree, we have $\mathrm{ph}_i(\xi)=(-1)^{i+1}/{(2i-1)!}\cdot p_i(\xi)$ and hence
\begin{equation}\label{eq:PontClasses}
p_1(\xi) = \lambda A \cdot u \quad\quad\quad p_2(\xi) = -6 \lambda B \cdot u \cdot z\quad\quad\quad p_3(\xi) = 120 \lambda C \cdot u \cdot z^2.
\end{equation}

To evaluate the surgery obstruction map $\sigma$, recall that a normal invariant $n$ with underlying stable vector bundle $\xi$ is represented by a degree 1 normal map
\begin{equation}\label{eq:Def1Normal}
\begin{tikzcd}
\nu_M \dar \rar{\hat{f}}& \nu_{D^4 \times \bfH P^2} \oplus\xi \dar\\
M^{12} \rar{f}& D^4 \times \bfH P^2,
\end{tikzcd}
\end{equation}
where $\partial M = \partial D^4 \times \bfH P^2$ and $f$ and $\hat{f}$ restrict to the identity maps on the boundary. Here $\nu_{(-)}$ denotes the stable normal bundle of a manifold. The surgery obstruction is unchanged by gluing into $M$ and $D^4 \times \bfH P^2$ a copy of $D^4 \times \bfH P^2$ along the identification of their boundaries with $\partial D^4 \times \bfH P^2$, and extending $f$ and $\hat{f}$ trivially, giving rise to a degree 1 normal map to $f'\colon M' \to S^4 \times \bfH P^2$. The surgery obstruction may then be expressed in terms of the signatures of these manifolds, as
\[\sigma(n) = \tfrac{1}{8}\big(\mathrm{sign}(M') - \mathrm{sign}(S^4 \times \bfH P^2)\big).\]

The signature of $S^4 \times \bfH P^2$ is trivial, and that of $M'$ may be computed in terms of the Hirzebruch signature theorem as the evaluation $\int_{M'} L(TM')$ of the $L$-class. As $f'$ has degree 1 and pulls back $\nu_{S^4 \times \bfH P^2} \oplus\xi$ to the stable inverse of $TM'$, we have
\begin{equation}\label{equ:sgn-M'}\textstyle{\mathrm{sign}(M') = \int_{M'} L(TM') = \int_{S^4  \times \bfH P^2} L(TS^4)\cdot L(T\bfH P^2)\cdot L(-\xi)}.\end{equation}
The first terms of the total $L$-class are given as
\begin{align*}L &= 1 + \tfrac{p_1}{3} + \tfrac{7\cdot p_2 - p_1^2}{45} + \tfrac{62 \cdot p_3 - 13 \cdot p_1p_2+2 \cdot p_1^3}{945} + \cdots \label{eq:Lclass},
\end{align*}
which we combine with $p(T\bfH P^2) = 1 + 2z + 7z^2$ from \cite[Satz 1]{Hirzebruch} to compute
\begin{align*}
L(TS^4)&=1\\ L(T\bfH P^2)&=1 + \tfrac{2}{3}\cdot z + z^2\\ L(-\xi)&=1+\lambda(-\tfrac{1}{3}A\cdot u + \tfrac{14 }{15}B\cdot (u\cdot z) - \tfrac{496}{63}C \cdot (u\cdot z^2))
\end{align*}
and thus
\[8 \sigma(n) = \mathrm{sign}(M') = \lambda(-\tfrac{1}{3}A +\tfrac{28}{45}B-\tfrac{496}{63}C).\]

It follows that for each triple $(A, B, C) \in \bfQ^3$ satisfying $\tfrac{1}{3}A -\tfrac{28}{45}B+\tfrac{496}{63}C=0$ there exists a non-zero $\lambda \in \bfZ$ and a degree 1 normal map as in \eqref{eq:Def1Normal} with $f$ a homotopy equivalence and with $\xi$ having Pontrjagin classes as in \eqref{eq:PontClasses}. This gives a smooth block $\bfH P^2$-bundle structure on the composition 
\[
M' \xra{f'} S^4 \times \bfH P^2 \xra{\pi_1} S^4
\]
giving rise to a class in $\pi_4(\BlockBDiff(\bfH P^2)$, so it remains to evaluate $\hat{A}(M')$. As in \eqref{equ:sgn-M'}, we get
\[\textstyle{\hat{A}(M') = \int_{M'} \hat{A}(TM') = \int_{S^4 \times \bfH P^2} \hat{A}(TS^4) \cdot \hat{A}(T\bfH P^2) \cdot \hat{A}(-\xi)},\]
which we combine with the formula for the first terms of the total $\hat{A}$-class
\begin{align*}
\hat{A} &= 1 - \tfrac{p_1}{24} + \tfrac{-4\cdot p_2+7\cdot p_1^2}{5760} + \tfrac{-16 \cdot p_3 + 44 \cdot p_2p_1 -31 \cdot p_1^3}{967680} + \cdots
\end{align*}
to compute
\begin{align*}
\hat{A}(TS^4)&=1\\ \hat{A}(T\bfH P^2) &= 1 - \tfrac{1}{12}\cdot z \\ \hat{A}(-\xi) &= 1+\lambda(\tfrac{1}{24}A\cdot u - \tfrac{1}{240}B\cdot (u\cdot z) + \tfrac{1}{504}C\cdot (u\cdot z^2))
\end{align*}
from which we conclude
\[\hat{A}(M') = \lambda(\tfrac{1}{2880}B + \tfrac{1}{504}C).\]
As there are clearly triples $(A,B,C) \in \bfQ^3$ satisfying 
\[-\tfrac{1}{3}A +\tfrac{28}{45}B-\tfrac{496}{63}C=0\quad\text{and}\quad \tfrac{1}{2880}B + \tfrac{1}{504}C \neq 0,\]
 this finishes the argument.

\begin{rem}
A fibre bundle $\pi \colon E \to S^4$ constructed in this way is fibre homotopy equivalent to the trivial bundle $\pi_1 \colon S^4 \times \bfH P^2 \to S^4$, and under this fibre homotopy trivialisation we have $p_1(TE) = 2\cdot (1 \otimes z) - \lambda A \cdot (u \otimes 1)$. Thus $p_1(TE)^3 = - 12 \lambda A \cdot (u \otimes z^2)$, and so 
\[\textstyle{\int_E p_1(TE)^3 = -12 \lambda A\quad\text{and} \quad\int_E \hat{A}_3(TE) = \lambda(\tfrac{1}{2880}B + \tfrac{1}{504}C).}\] This argument therefore guarantees the existence of a 2-dimensional subspace of the group $\pi_4(B\Diff(\bfH P^2))\otimes\bfQ$, detected by the characteristic numbers $\int_E p_1(T E)^3$ and $\int_E \hat{A}_3(T E)$. 
\end{rem}

\begin{rem}\label{rem:section}
The Theorem can be slightly strengthened: we may in addition assume that the smooth $\bfH P^2$-bundle $\pi\colon E^{12}\ra S^4$ admits a smooth section with trivial normal bundle, which may be helpful for fibrewise surgery constructions. 

To see this, note that a fibre bundle $\pi\colon E \to S^4$ as constructed above is fibre homotopy equivalent to the trivial bundle $\pi_1 \colon S^4 \times \bfH P^2 \to S^4$, so it admits a smooth section $s\colon S^4 \to E$ corresponding to a trivial section of the trivial bundle. By the description of $p_1(TE)$ in the previous remark we have $s^* p_1(TE) = -\lambda A \cdot u$. If we choose $(A=0, B= \tfrac{496}{63}, C=\tfrac{28}{45})$, which is a triple whose surgery obstruction vanishes, then the corresponding bundle has $s^* p_1(TE)=0$ and $\hat{A}(E) \neq 0$. As $TS^4$ is stably trivial, it follows that the normal bundle of $s(S^4) \subset E$ has trivial first Pontrjagin class. This implies that the normal bundle is trivial, since $p_1\colon \pi_4(\BSO(8)) \to \bfZ$ is injective as $\pi_4(\BSO(8))\cong\pi_4(\BSO)$ is stable.
\end{rem}

\begin{rem}\label{rem:higher-dimensions}
	The argument can be generalised to prove that for any even $n \geq 2$ there is a smooth $\bfH P^n$-bundle $E^{4n+4} \to S^4$ with $\hat{A}(E) \neq 0$, so the Corollary holds for $\bfH P^n$ as well. 
	
	Indeed, the application of Morlet's lemma only required the fibre to be at least $8$-dimensional and $3$-connected. 
	Similar to the above, one argues that for any pair $(A,C)\in\bfQ^2$ there exists a nonzero $\lambda\in\bfZ$ and a normal invariant $n\in\cN_\partial(D^4\times \bfH P^n)$ with underlying stable vector bundle $\xi$ such that 
	\[p_1(\xi) = \lambda A\cdot u\quad\quad p_i(\xi) = 0 \text{ for $1 < i < n$,}\quad\quad p_{n+1}(\xi) = \lambda(2n+1)!(-1)^n C \cdot u \cdot z^n.\]
	Using that the coefficient of $z^n$ in $L(T\bfH P^n)$ is $1$ by the Hirzebruch's signature theorem, we see that the surgery obstruction satisfies $8\sigma(n) =\lambda(-\tfrac{1}{3}A+h_{n+1}(2n+1)!(-1)^{n+1} C)$ where $h_n$ is the coefficient of $p_n$ in the total $L$-class. In particular, we can always find pairs $(A,C)$ with $C \neq 0$ and $8\sigma(n) = 0$. As the coefficient of $z^n$ in $\hat{A}(T\bfH P^n)$ vanishes, because $\bfH P^n$ admits a metric of positive scalar curvature, it holds $\hat{A}(E) = \lambda a_{n+1}(2n+1)!(-1)^{n+1} C\neq0$ with $a_n$ the coefficient of $p_n$ in the total $\hat{A}$-class, which is easily proved to be non-zero using \cite[Ch.~1.\S1 (10)]{HirzebruchBook}.
\end{rem}

\begin{rem}\label{rem:high-base-dimension}
If one is willing to instead consider $\bfH P^n$-bundles over $S^{4m}$ for $n$ sufficiently large compared with $m$, then one may replace the appeal to the Lemma by the more classical \cite[Corollary D]{BurgheleaLashof}, which implies that the map
\[\pi_{4m}(\hAut(\bfH P^n)/\Diff(\bfH P^n);\id) \otimes \bfZ[\tfrac{1}{2}] \lra \pi_{4m}(\hAut(\bfH P^n)/\BlockDiff(\bfH P^n);\id) \otimes \bfZ[\tfrac{1}{2}]\]
is (split) surjective as long as $4m-1$ lies in the pseudoisotopy stable range for $\bfH P^n$ (so $3m < n$ suffices, by \cite{Igusa}). See also \cite[Theorem 1]{Burghelea}. One must still produce an appropriate element of $\cS_\partial(D^{4m}\times \bfH P^n)$, which may be approached as in Remark \ref{rem:higher-dimensions}.
\end{rem}

\subsection*{Acknowledgements}
We thank Thomas Schick for encouraging us to write this note, and Johannes Ebert for a comment which reminded us of \cite{Burghelea} and \cite{BurgheleaLashof}. The first and third author were supported by a Philip Leverhulme Prize from the Leverhulme Trust. The third author was also supported by the ERC under the European Union's Horizon 2020 research and innovation programme (grant agreement No.\ 756444). 

\bibliographystyle{amsalpha}
\bibliography{literature}
\vspace{0.2cm}

\end{document}